\def\eqref#1{\textcolor{blue}{(\ref{#1})}}
\def\equationautorefname~#1\null{Eq.~(#1)\null}
\def\eqref#1{\textcolor{blue}{(\ref{#1})}}
\def\r{\mathbb R}
\def\eqref#1{\textcolor{blue}{(\ref{#1})}}
\newtheorem{theorem}{Theorem}[section]
 \newtheorem{proposition}[theorem]{Proposition}
\theoremstyle{definition}
\author{Muhittin Evren Aydin$^1$, 
	Ayla Erdur Kara$^{2}$  }{}			% * Corresponding author, Please write your Orcid number
\address{
	$^1$Department of Mathematics, Faculty of Science, Firat University, 23200, Elazig, Turkey. \newline
	$^{2}$Department of Mathematics, Faculty of Science and Art, Tekirdag Namik Kemal University, 59030, Tekirdag, Turkey. 
}
\begin{document}

\title{Singular Minimal Ruled Surfaces}

\keywords{$\alpha$-catenary, Singular minimal surface, Ruled surface, Lorentz--Minkowski space}
\subjclass{ 53A10, 53C42, 53C50}
\begin{abstract}

In this paper we study surfaces with minimal potential energy under gravitational forces, called singular minimal surfaces. We prove that a singular minimal ruled surface in a Euclidean $3-$space is cylindrical, in particular as an $\alpha-$catenary cylinder by a result of L\'{o}pez [Ann. Glob. Anal. Geom. 53(4) (2018), 521-541]. This result is also extended in Lorentz-Minkowski $3-$space.
\end{abstract}
\maketitle

%%%%%%%%%%%%%%%%%%%%%%%%%
\section{Introduction} \label{intro}
%%%%%%%%%%%%%%%%%%%%%%%%%

\quad Let $\mathbb{R}^3$ be a Euclidean 3–space with the canonical metric $\langle , \rangle$ and $(x,y,z)$ the rectangular coordinates. Denote by $X:M^2\to\mathbb{R}^3_+(\vec{v})$ a smooth immersion of an oriented surface $M^2$ into the halfspace 
\begin{equation*}
	\mathbb{R}^3_+(\vec{v})=\left\{q \in \mathbb{R}^3, \langle q, \vec{v}\rangle >0\right\},
\end{equation*}
for a given unit vector $ \vec{v} \in \mathbb{R}^3$. The potential $\alpha$--energy of $X$ in the direction $ \vec{v}$ is defined as 
 \begin{equation*}
 	E(X)=\int_{M^2} \langle X(q), \vec{v}\rangle^{\alpha}dM^2,\quad q\in M^2,
 \end{equation*} 
where $dM^2$ denote the measure on $M^2$ with respect to the induced metric tensor from $\mathbb{R}^3$. If $X$ is the critical point of the energy $E$, then the mean curvature $H$ of $M^2$ verifies
\begin{equation}\label{smequation}
	2H=\alpha\frac{\langle N, \vec{v}\rangle}{\langle X, \vec{v} \rangle},
\end{equation}
where $N$ denotes the unit normal vector field of $M^2$. A surface $M^2$ with \eqref{smequation} is called a {\it singular minimal surface} or $\alpha-${\it minimal surface} \cite{Dierkes2003singular}. The trivial case $\alpha=0$ leads $M^2$ to the well-known minimal surface (see \cite{Lopez2013constant}).

\quad Since the 1980s, the concept of singular minimal surfaces, one of the important topics in geometric analysis, has been of interest, originating in the study of the isoperimetric problem in $\mathbb{R}^3$ where a surface $M^2$ with minimal potential energy under gravitational forces is to be found (see \cite{bht}). This surface $M^2$ becomes the so-called {\it two-dimensional analogue of the catenary} desribed by Eq. \eqref{smequation} if the $z-$axis indicates the direction of gravity and if $\alpha =1$ and $\vec{v}=(0,0,1).$ Nitsche \cite{n} gave a necessary condition for the existence of the regular extremals. The corresponding variational problem was solved by Bemelmans and Dierkes \cite{bd}. In higher-dimensional case, Dierkes and Huisken \cite{dh} determined the conditions of (non-)existence of the problem. In addition, for energy minimizing hypersurfaces, a result of Bernstein type was showed in \cite{d1}. More recently, L\'{o}pez has published a number of papers on singular minimal surfaces \cite{Lopez2018invariant,Lopez2019,Lopez2019-1,Lopez2020-1,Lopez2020thetwo}, including the classification results, the existence and uniqueness of solutions to the Dirichlet problem, and the study of the shape of the compact ones using their boundaries. Because the physical relevance of the singular minimal surfaces, they find applications in various disciplines such as physics, engineering and architecture \cite{Giaquinta2004calculus, Gill2005catenary, Pottman2007architectural}. 

\quad Let $\gamma=\gamma(s)$ be a parameterized curve in $\mathbb{R}^2$. Recall that $\gamma$ is called $\alpha-${\it catenary} if satisfies (see \cite{Dierkes2003singular,Lopez2018invariant}) 
\begin{equation*}
	\kappa(s)=\alpha\frac{\langle n,\vec{v}\rangle}{\langle \gamma, \vec{v} \rangle}, \quad \alpha \in \mathbb{R},
\end{equation*}
where $\kappa$ and $n$ are the curvature and principal normal vector field of $\gamma$, respectively. The case that $\alpha=1$ refers to the catenary.

\quad Let $\mathbb{L}^3$ denote Lorentz-Minkowski space with the canonical Lorentzian metric $\langle ., .\rangle_L=dx^2+dy^2-dz^2$. L\'{o}pez \cite{Lopez2020thetwo} extended in $\mathbb{L}^3$ the notion of the singular minimal surfaces. In the Lorentzian setting, however, some problems arise immediately. For example, the $z-$coordinate now represents the time coordinate and so the concept of gravity has no more sense. Hence, the counterpart in $\mathbb{L}^3$ of  Eq. \eqref{smequation} is to characterize surfaces with prescribed angle between the normal vector field and a fixed direction. The other problem arises from three different causal characters of the surfaces, each of which behaves completely. Our interest in $\mathbb{L}^3$ is only for the spacelike surfaces, since we would like to keep the Riemannian setting. In addition, for a spacelike surface, the normal vector field is timelike. Since the $z-$axis is also timelike, the hyperbolic angle between two timelike vectors makes sense (see \cite{Lopez2014differential,o}).

\quad Let $\vec{v} \in \mathbb{L}^3$ be a fixed timelike vector and $X$ a smooth immersion of a spacelike surface $M^2$ into the half-space $\mathbb{L}^3_{+}(\vec{v})$. Then, $M^2$ is called a $\alpha-${\it singular maximal surface} if the mean curvature $H$ verifies
\begin{equation}\label{smequationlorentz}
	H=\alpha\frac{\langle N,\vec{v}\rangle_L}{\langle X,\vec{v}\rangle_L}, 
\end{equation}
where $N$ is the timelike normal vector field of $M^2$ \cite{Lopez2020thetwo}.

\quad On the other hand, a ruled surface is a one-parameter family of straight lines and locally accepts the  parametrization
\begin{equation*}
	X(s,t)=\gamma(s)+tw(s), \quad s\in I \subset\mathbb{R}, \quad  t\in \mathbb{R},
\end{equation*}
where $\gamma(s)$ is a regular curve and $w(s)$  a nonzero vector field with $\|w(s)\|=1$ for all $s\in I\subset\mathbb{R}$. The curve $\gamma(s)$ is said to be the {\it base} (or the {\it directrix}) of the surface and a line admitting $w(s)$ as a direction vector is called a {\it ruling} of the surface. Since the notion of ruled surface is affine and not metric, the above parametrization is also valid for $\mathbb{L}^3$. If all rulings are parallel to a fixed direction, i.e. $w^{\prime}(s)=0$ for every $s$, then the surface is cylindrical. 

\quad We found our motivation in study of the mean and the inverse mean curvature flows. Explicitly, Hieu and Hoang \cite{hh} proved that a ruled translating soliton to the mean curvature flow in $\mathbb{R}^3$ must be cylindrical. In contrast, the first author and L\'{o}pez \cite{al} showed the existence of non-cylindrical ruled translating solitons in the Lorentzian setting, which has no Euclidean counterpart. The same situation also arises for the translating solitons to the inverse mean curvature flow (see \cite{kp,ns}). Inspired by the cited works, we will investigate the singular minimal and maximal surfaces when they are ruled. We prove in the Euclidean setting that a singular minimal ruled surface is an $\alpha-$catenary cylinder (Theorem \ref{theorem-E}). This result is also extended in Lorentz-Minkowski space, but it is not a direct consequence of the Euclidean case. Because now there are two separate cases to investigate: $w'$ is non-degenerate (see Theorem \ref{L-1}) or lightlike (Theorem \ref{L-2}). At this point we would also like to emphasise that our consequences differ from the classical result that besides the planes, the minimal ruled surfaces are only the catenoids \cite{gray}.

%%%%%%%%%%%%%%%%%%%%%%%%%
\section{ Preliminaries}
%%%%%%%%%%%%%%%%%%%%%%%%%
\quad In this section we will express an auxiliary formula so that an immersed surface in $\mathbb{R}^3$ (resp. $\mathbb{L}^3$) can be singular minimal (resp. maximal).  We will mention only the basics of $\mathbb{L}^3$ because, having obtained the formula (see Eq. \eqref{sin.min.lor.}) explicitly in $\mathbb{L}^3$, we will understand that it is also valid in $\mathbb{R}^3$ with a small difference.

\quad A vector $\vec{v}\in \mathbb{L}^3 $ is called \emph{spacelike} if $\langle \vec{v},\vec{v}\rangle_L>0$ or $\vec{v}=0$, \emph{timelike} if $\langle \vec{v},\vec{v}\rangle_L<0$ and \emph{null} (\emph{lightlike}) if  $\langle \vec{v},\vec{v}\rangle_L=0$ and $\vec{v}\neq0$. If $\vec{v}$ is a timelike vector, then the \emph{timelike cone} including $v$ is defined by
$$
T(\vec{v})= \{\vec{w} \in \mathbb{L}^3:\langle \vec{v},\vec{w} \rangle_L<0\}.
$$
Let $\vec{v},\vec{w}$ be two vectors included in the same timelike cone. The {\it hyperbolic angle} between them is defined as a number $\theta$ such that
$$
\langle \vec{v},\vec{w} \rangle_L =-\| \vec{v}\|_L\| \vec{w}\|_L \cosh \theta,
$$
where $\|\vec{v}\|_L=\sqrt{|\langle \vec{v},\vec{v} \rangle_L|}$.

\quad The concepts of orthogonality and orthonormality is defined in the same way as in $\mathbb{R}^3$.  The characterization of subspaces in $\mathbb{L}^3$ depending on the causal characters is the following: A vector $\vec{v}\in \mathbb{L}^3$  is timelike (resp. spacelike) if and only if $\langle \vec{v} \rangle^{\perp}$ is spacelike (resp. timelike) and so $\mathbb{L}^3=\langle \vec{v} \rangle \oplus \langle \vec{v} \rangle ^{\perp} $. A subspace $U\subset \mathbb{L}^3$ is spacelike (resp. timelike or lightlike) if and only if $U^{\perp}$ is timelike (resp. spacelike or lightlike).

 \quad The Lorentzian cross product of two vectors $\vec{u},\vec{v} \in \mathbb{L}^3$  is defined as the unique vector $\vec{u}\times_L \vec{v}$ such that $\langle \vec{u}\times_L \vec{v},\vec{w}\rangle_L=\left(\vec{u},\vec{v},\vec{w}\right)$ for every $\vec{w} \in \mathbb{L}^3$, where $\left(\vec{u},\vec{v},\vec{w}\right)$ denotes the determinant of the $3\times 3$ matrix formed by the vectors $\vec{u},\vec{v}$ and $\vec{w}$.
 
 \quad Let $ M^2$ be a surface isometrically immersed in $\mathbb{L}^3$ whose induced metric is non-degenerate. If the metric is \textit{Riemannian} (resp. \textit{Lorentzian}), the surface is said to be\textit{ spacelike} (resp. \textit{timelike}). For a parametrization $X=X(s,t)$ of $ M^2$, the unit normal vector field  $N$ of $X$ is defined by 
\begin{equation*}
	N=\frac{X_s \times_L X_t}{\|X_s \times_L X_t\|_L}.
\end{equation*}

\quad The coefficients of the first fundamental form and the second fundamental form are, respectively,
 \begin{equation*}
 	E=\langle X_s,X_s \rangle_L, \quad F=\langle X_s,X_t \rangle_L, \quad  G=\langle X_t,X_t \rangle_L
 \end{equation*}
and 
\begin{eqnarray*}
	e=\langle N,X_{ss} \rangle_L&=&\frac{\left(X_s,X_t,X_{ss}\right)}{\sqrt{\left|EG-F^2\right|}},\\
	f=\langle N,X_{st} \rangle_L&=&\frac{\left(X_s,X_t,X_{st}\right)}{\sqrt{\left|EG-F^2\right|}},\\
	g=\langle N,X_{tt} \rangle_L&=&\frac{\left(X_s,X_t,X_{tt}\right)}{\sqrt{\left|EG-F^2\right|}},
\end{eqnarray*}
where $\sqrt{\left|EG-F^2\right|}=\|X_s \times_L X_t\|_L$.

\quad Set $\langle N,N\rangle_L=\epsilon$. If $N$ is spacelike (resp. timelike), then $\epsilon=1$ (resp. $\epsilon=-1$). Hence $\epsilon=-1$ (resp. $\epsilon=1$) if  $ M^2 $ is spacelike (resp. timelike). Noting that $|EG-F^2|=-\epsilon(EG-F^2)$, the mean curvature is
\begin{equation*}\label{Lorentzmeancurvature}
	H=-\frac{1}{2}\frac{G(X_s,X_t,X_{ss})-2F(X_s,X_t,X_{st})+E(X_s,X_t,X_{tt})}{|EG-F^2|^{3/2}},
\end{equation*}
Hence,  Eq. \eqref{smequationlorentz} is now
\begin{equation}\label{sin.min.lor.}
	G(X_s,X_t,X_{ss})-2F(X_s,X_t,X_{st})+E(X_s,X_t,X_{tt})=\epsilon \alpha\frac{EG-F^2}{\langle X,\vec{v} \rangle}(X_s,X_t,\vec{v}) .
\end{equation}
For the Euclidean setting,  Eq. \eqref{sin.min.lor.} is still valid discarding $\epsilon$.

%%%%%%%%%%%%%%%%%%%%%%%%%%%%%%%%%
\section{Singular Minimal Ruled Surfaces}
%%%%%%%%%%%%%%%%%%%%%%%%%%%%%%%%%

\quad In this section we will classify singular minimal surfaces when they are ruled. Recall that an $\alpha-${\it catenary cylinder} is a cylinder which admits the base curve as a $\alpha-$catenary. By the result of L\'{o}pez \cite[Theorem 1]{Lopez2018invariant}, if a singular minimal surface is cylindrical then it is either a plane parallel to $\vec{v}$ or a $\alpha-$catenary cylinder whose rulings are orthogonal to $\vec{v}$. This result plays a key role in our study.

\quad  We will be interested in the ruled surfaces $M^2$ in $\mathbb{R}^3$ which are non-cylindrical. Hence, the surface $M^2$ parameterizes as
\begin{equation} \label{teo1.1}
X(s,t)=\gamma(s)+tw(s), \quad s\in I\subset\mathbb{R}, \quad t\in \mathbb{R},
\end{equation}
where $w'(s) \neq 0$ for every $s \in I$. By reparameterization we may assume that 
\begin{equation} \label{ruled}
\langle \gamma^{\prime}(s), w(s)\rangle=\langle\gamma^{\prime}(s),w^{\prime}(s) \rangle=0, \quad  \langle w(s),w(s) \rangle=\langle w^{\prime}(s),w^{\prime}(s)\rangle=1 ,
\end{equation} 
for every $s \in I$  (see \cite[Proposition 3.4]{Ch01oi20classification}).
  
\quad We now introduce the functions $P(s)$ and $Q(s)$ defined on $ I $:
\begin{equation*}\label{teo1.2}
	P(s)=( w(s), w'(s), \gamma'(s) ) , \quad Q(s)=( w(s), w'(s), w''(s)).
\end{equation*}
Since $\langle w(s), w'(s)\rangle =0$ for every $s$, we have the orthonormal basis $\left\{w,w^{\prime},w\times w^{\prime}\right\}$. In terms of the basis, 
\begin{eqnarray}
	\gamma^{\prime}(s)&=&P(s)w(s)\times w^{\prime}(s),\label{teo1.3}\\
	\gamma^{\prime}(s)\times w(s)&=&P (s)w^{\prime}(s), \label{teo1.4}\\
	w^{\prime\prime}(s)&=&-w(s)+Q(s)w(s)\times w^{\prime}(s), \label{teo1.5}\\
	w(s)\times w^{\prime\prime}(s)&=&-Q(s)w^{\prime}(s),\label{teo1.6}
\end{eqnarray}
where $ P(s) \neq 0$ on $I$. Because $\langle w(s), w(s) \rangle =1$, the vector field $w(s)$ defining the rulings can be viewed as a curve lying on a unit sphere $\mathbb{S}^2$ in $\r^3$ and so the function $Q(s)$ plays the role of its geodesic curvature.

\quad Some quantities are
\begin{eqnarray*}
	E&=&P^2+t^2,\quad F=0,\quad G=1,\\
	X_s\times X_t&=&Pw^{\prime}-tw\times w^{\prime},\\
	X_{ss}&=&-tw-PQw^{\prime}+(P^{\prime}+tQ)w\times w^{\prime},\\
	(X_s,X_t,X_{ss})&=&-P^2Q-P^{\prime}t-Qt^2.
\end{eqnarray*}

\quad In what follows, we obtain the non-existence of singular minimal ruled surfaces, except the cylindrical ones.
\begin{theorem} \label{theorem-E}
The planes and $\alpha$--catenary cylinders in $\mathbb{R}^3$  are the only singular minimal ruled surfaces. 
\end{theorem}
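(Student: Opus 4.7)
The plan is to reduce to showing that any singular minimal ruled surface must be cylindrical; once that is done, López's theorem \cite{Lopez2018invariant} immediately yields the classification into planes (parallel to $\vec{v}$) and $\alpha$-catenary cylinders. So I would assume for contradiction that $M^2$ is non-cylindrical and use the canonical parametrization $X(s,t)=\gamma(s)+tw(s)$ satisfying \eqref{ruled}, with $w'(s)\neq 0$.

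Substituting the quantities listed just before the theorem statement into the Euclidean version of \eqref{sin.min.lor.} (discarding $\epsilon$, with $F=0$ and $X_{tt}=0$) turns the singular minimal equation into
\[
(-P^2Q-P't-Qt^2)\bigl(\langle\gamma,\vec{v}\rangle+t\langle w,\vec{v}\rangle\bigr)=\alpha(P^2+t^2)\bigl(P\langle w',\vec{v}\rangle-t\langle w\times w',\vec{v}\rangle\bigr),
\]
which must hold for all $t\in\mathbb{R}$. Setting $a=\langle\gamma,\vec{v}\rangle$, $b=\langle w,\vec{v}\rangle$, $c=\langle w',\vec{v}\rangle$, $d=\langle w\times w',\vec{v}\rangle$ and matching coefficients of $t^0,\ldots,t^3$, I would obtain four scalar relations. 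Combining them pairwise yields $P'a=0$ and $P'b=0$; since $a=\langle X(s,0),\vec{v}\rangle>0$ in the halfspace $\mathbb{R}^3_+(\vec{v})$, this forces $P$ to be a nonzero constant. The surviving relations are then $Qb=\alpha d$ and $Qa+\alpha Pc=0$.

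To close the argument, I would exploit that $\vec{v}$ is constant, so its expansion $\vec{v}=bw+cw'+d(w\times w')$ in the orthonormal frame gives the ODE system $b'=c$, $c'=-b+Qd$, $d'=-Qc$, together with $a'=Pd$ and the conserved quantity $b^2+c^2+d^2=\|\vec v\|^2$. A clean first integral comes from $a\,a'+P^2 b\,b'=PQab-P Qab=0$, so $a^2+P^2b^2$ is constant. Substituting $c,d$ from the surviving relations into $b^2+c^2+d^2$ and using this first integral should express $Q^2$ as an affine function of $b^2$; differentiating $Qa+\alpha Pc=0$ a second time and inserting this expression for $Q^2$ should force $b^2$ to be constant. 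Then $c=b'=0$, hence $Qa=0$, hence $Q\equiv 0$, and backtracking gives $c=d=0$ so that $\vec{v}=bw$ with $b$ constant; differentiating yields $bw'=0$, contradicting $\|w'\|=1$. The degenerate subcases ($Q\equiv 0$ on an interval, or $b\equiv 0$) have to be treated separately but collapse quickly by the same chain of implications.

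The main obstacle I expect is the bookkeeping of the algebraic identities after coefficient matching: the polynomial identity in $t$ yields several coupled relations between $P,Q,a,b,c,d$, and one has to combine them with the constancy of $\vec v$ cleverly (in particular, spotting the first integral $a^2+P^2b^2$) rather than brute-forcing repeated differentiations. Once the right invariants are identified, the contradiction with $\|w'\|=1$ is mechanical, and López's cylindrical classification finishes the proof.
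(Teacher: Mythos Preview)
Your proposal is correct and follows essentially the same route as the paper: expand the singular minimal equation as a polynomial in $t$, match coefficients to force $P'=0$ via $\langle X,\vec v\rangle>0$, then derive that $\langle w,\vec v\rangle^2$ must equal the constant $(1+\alpha)/(2+\alpha)$, whose differentiation yields the contradiction. The only cosmetic difference is in the algebra after $P'=0$: you package things via the first integral $a^2+P^2b^2$ and the expression of $Q^2$ as an affine function of $b^2$, whereas the paper eliminates $Q$ from the two surviving relations to obtain $P_0cb+ad=0$ (which is exactly the derivative of your first integral, up to a factor of $2P$), differentiates that once, and combines with $b^2+c^2+d^2=1$; both arrive at $(2+\alpha)b^2=1+\alpha$ and conclude identically. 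Two minor slips worth fixing: in your first-integral computation the two summands each carry a hidden factor $1/\alpha$ (they cancel, so the conclusion stands), and a \emph{single} differentiation of $Qa+\alpha Pc=0$ already forces $b^2$ constant---no second differentiation is needed.
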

\begin{proof}
The proof is by contradiction. Let $M^2$ be a non-cylindrical ruled surface parametrized by Eq. \eqref{teo1.1} where the equalities in Eq. \eqref{ruled} hold. Assume that $M^2$ is singular minimal. Then Eq. \eqref{sin.min.lor.} is now 
\begin{equation}\label{teo1.8}
	\sum_{n=0}^{3}A_nt^n=0,
\end{equation}
where 
\begin{eqnarray}
	A_0&=&\alpha P^3\langle w^{\prime},\vec{v}\rangle+P^2Q\langle \gamma,\vec{v} \rangle,\label{teo1.9}\\
	A_1&=&-\alpha P^2( w , w^{\prime},\vec{v})+P^2Q\langle w,\vec{v} \rangle+P^{\prime}\langle \gamma,\vec{v} \rangle,\label{teo1.10}\\
	A_2&=&\alpha P\langle w^{\prime},\vec{v}\rangle+Q\langle \gamma,\vec{v} \rangle+P^{\prime}\langle w,\vec{v} \rangle,\label{teo1.11}\\
	A_3&=&-\alpha ( w , w^{\prime},\vec{v})+Q\langle w,\vec{v}\rangle.\label{teo1.12}
\end{eqnarray}
Since Eq. \eqref{teo1.8} is a polynomial equation in $ t $, the coefficient functions $ A_0,...,A_3 $ are identically $0$. From Eqs. \eqref{teo1.9} and \eqref{teo1.11}, we conclude $P^{\prime}\langle w,\vec{v}\rangle=0$. Analogously, from Eqs. \eqref{teo1.10} and \eqref{teo1.12} we have $	P^{\prime}\langle \gamma,\vec{v}\rangle=0$. Hence, one yields $ P^{\prime}=0 $ because otherwise one contradicts with $\langle X, \vec{v}\rangle >0$.  Set $ P(s)=P_0\neq0 $. We claim that
\begin{equation}
	Q(s)\neq 0, \quad \left\langle \gamma ,\vec{v}\right\rangle \neq 0, \quad \left\langle
	w ,\vec{v}\right\rangle \neq 0, \quad \text{for every $s \in I$.} \label{teo1.13}
\end{equation}
The proof of our claim is as follows:
\begin{enumerate}
\item Case $Q=0$. From Eqs.\eqref{teo1.9} and \eqref{teo1.12} we have $\left\langle w ^{\prime },\vec{v}\right\rangle =0$ and $	( w ,w ^{\prime },\vec{v})=0$, respectively. This gives $w \left( s\right) $ parallel to $\vec{v}$ for every $s\in I$. Being $\vec{v}$ fixed, the cylindrical case, i.e. $w \left( s\right) =\pm \vec{v}$, appear, which was initally discarded. 
\item Case $\left\langle\gamma ,\vec{v}\right\rangle =0$. Differentiating, $\left\langle \gamma ^{\prime },%
\vec{v}\right\rangle =0$ or $P_{0}( w , w ^{\prime },%
\vec{v})=0$ due to Eq. \eqref{teo1.3}. It follows from Eq. \eqref{teo1.12}
that $\left\langle w ,\vec{v}\right\rangle =0,$ contradicting $\langle X, \vec{v}\rangle >0$.
\item Case $\left\langle w ,\vec{v}\right\rangle =0$. Since $\left\langle w
^{\prime },\vec{v}\right\rangle =0$, we deduce $\vec{v}$ parallel to $w
\times w ^{\prime },$ that is, $w \times w ^{\prime }=\pm \vec{v}.$
This gives from Eq. \eqref{teo1.12} that $\mp \alpha \left\langle \vec{v},\vec{v}\right\rangle =0$, which is not possible.
\end{enumerate}
\quad We now proceed the proof, writing Eqs.  \eqref{teo1.9} and \eqref{teo1.12} respectively as
\begin{eqnarray}
	-\alpha P_{0}\left\langle w ^{\prime },\vec{v}\right\rangle 
	&=&Q\left\langle \gamma ,\vec{v}\right\rangle ,  \label{teo1.14} \\
	\alpha ( w ,w ^{\prime },\vec{v})
	&=&Q\left\langle w ,\vec{v}\right\rangle .  \label{teo1.15}
\end{eqnarray}%
Since all the terms in Eqs. \eqref{teo1.14} and \eqref{teo1.15} are nonzero, we have%
\begin{equation}
	P_{0}\left\langle w ^{\prime },\vec{v}\right\rangle \left\langle w ,%
	\vec{v}\right\rangle +( w ,w ^{\prime },\vec{v}) \left\langle \gamma ,\vec{v}\right\rangle =0.  \label{teo1.16}
\end{equation}%
We differentiate Eq. \eqref{teo1.16} and next use Eqs. \eqref{teo1.3}-\eqref{teo1.6}, obtaining
\begin{equation}
	-P_{0}\left\langle w ,\vec{v}\right\rangle ^{2}+P_{0}\left(
	Q\left\langle w ,\vec{v}\right\rangle ( w ,w ^{\prime },\vec{v}) +( w ,w ^{\prime },\vec{v})^{2}\right) +P_{0}\left\langle w^{\prime },\vec{v}%
	\right\rangle ^{2}-Q\left\langle \gamma ,\vec{v}\right\rangle \left\langle
	w^{\prime },\vec{v}\right\rangle =0.  \label{teo1.17}
\end{equation}%
Substituting Eqs. \eqref{teo1.14} and \eqref{teo1.15} into Eq. \eqref{teo1.17}, we have
\begin{equation}
- \left\langle w,\vec{v}\right\rangle ^{2} + 	\left( 1+\alpha \right) \left ( \left\langle w ^{\prime },\vec{v}\right\rangle
	^{2}+( w ,w ^{\prime },\vec{v})^{2} \right) =0.  \label{teo1.18}
\end{equation}%
On the other hand, since $\vec{v}$ is unitary, 
\begin{equation}
 \left\langle w,\vec{v}\right\rangle ^{2} +  \left\langle w ^{\prime },\vec{v}\right\rangle
	^{2}+( w ,w ^{\prime },\vec{v})^{2}=1.  \label{teo1.19}
\end{equation}
From Eqs. \eqref{teo1.18} and \eqref{teo1.19} we have $(2+\alpha)\left\langle w,\vec{v}\right\rangle ^{2} =1+\alpha$. Differentiating, $\left\langle w',\vec{v}\right\rangle =0$ or $\left\langle \gamma,\vec{v}\right\rangle =0$ due to Eq. \eqref{teo1.14}. This contradicts with Eq. \eqref{teo1.13}.
\end{proof}

%%%%%%%%%%%%%%%%%%%%%%%%%%%%%%%%%
\section{Singular Maximal Ruled Surfaces}
%%%%%%%%%%%%%%%%%%%%%%%%%%%%%%%%%

\quad Since we are interested in a spacelike ruled surface $M^2$, a parameterization on it is
\begin{equation}\label{pm}
	X(s,t)=\gamma(s)+tw(s), \quad s\in I\subset \mathbb{R}, \quad t\in \mathbb{R}. 
\end{equation}
Here, because of the causal character of $M^2$, the only possibility is that both the base curve and the vector field which defines the rulings are spacelike \cite{cky}. Hence, $\left\langle \gamma' (s) , \gamma' (s) \right\rangle_L >0$ and $ \left\langle w(s), w(s) \right\rangle_L >0 $ for every $s  \in I$. 

\quad To simplify our calculations, we examine the reparameterization of a given spacelike non-cylindrical ruled surface, similar to the Euclidean case (see \cite[Proposition 3.4]{Ch01oi20classification}).

\begin{proposition} \label{propos}
Let $ M^2 $ be a spacelike non--cylindrical ruled surface in $\mathbb{L}^3$ with parametrization 
	\begin{equation*}
		X_1(s,t)=\gamma_1(s)+tw(s),
	\end{equation*}
for $\langle \gamma_1^{\prime},w\rangle_L=0$, $\langle w,w\rangle_L=1$ and $\langle w^{\prime},w^{\prime}\rangle_L=\pm 1$.  Then, there exists a reparametrization of $ M^2 $ such that
	\begin{equation*}
		X(s,t)=\gamma(s)+tw(s),
	\end{equation*}
where $\langle \gamma^{\prime},w\rangle_L=0$,  $\langle \gamma^{\prime},w^{\prime}\rangle_L=0$, $\langle w,w\rangle_L=1$ and $\langle w^{\prime},w^{\prime}\rangle_L=\pm 1$.
\end{proposition}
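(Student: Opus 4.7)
The plan is to reparametrize by sliding the base curve along the rulings, since the ruling direction $w$ is intrinsic (up to sign) to the surface. I would therefore set
\[
\gamma(s) \;=\; \gamma_1(s) + \phi(s)\,w(s)
\]
for a scalar function $\phi$ to be determined. Because $\gamma(s)$ still lies on the ruling through $\gamma_1(s)$, the parametrization $X(s,t)=\gamma(s)+t\,w(s)$ describes the same surface $M^2$ (it corresponds to the substitution $t\mapsto t-\phi(s)$ along each ruling). Moreover, since $w$ itself is left unchanged, the two unit-norm conditions $\langle w,w\rangle_L=1$ and $\langle w',w'\rangle_L=\pm 1$ transfer automatically.

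Next, differentiating yields $\gamma'=\gamma_1'+\phi'\,w+\phi\,w'$. Pairing with $w$ and $w'$, and using the hypothesis $\langle \gamma_1',w\rangle_L=0$ together with the consequence $\langle w,w'\rangle_L=0$ of $\langle w,w\rangle_L=1$, I get
\[
\langle \gamma',w\rangle_L \;=\; \phi', \qquad \langle \gamma',w'\rangle_L \;=\; \langle \gamma_1',w'\rangle_L \pm \phi,
\]
with the sign matching $\langle w',w'\rangle_L=\pm 1$. The natural choice $\phi(s)=\mp\langle \gamma_1'(s),w'(s)\rangle_L$ clears the second inner product, while the first condition $\langle \gamma',w\rangle_L=0$ reduces to the requirement that $\phi$ be constant in $s$.

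The main technical point is therefore the verification that the chosen $\phi$ is indeed constant. I would try to extract this by differentiating $\langle \gamma_1',w\rangle_L=0$ and inserting the expansion of $w''$ in the orthonormal frame $\{w,w',w\times_L w'\}$ dictated by the unit-norm hypotheses: the identities $\langle w,w'\rangle_L=0$, $\langle w,w''\rangle_L=\mp 1$ and $\langle w',w''\rangle_L=0$ force a decomposition of the form $w''=\mp w+Q(s)(w\times_L w')$, and these, together with $\gamma_1'\in\mathrm{span}\{w',w\times_L w'\}$ (from $\langle \gamma_1',w\rangle_L=0$), should pin the scalar $\langle \gamma_1',w'\rangle_L$ down to a constant. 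This constancy step is the delicate heart of the argument and the direct Lorentzian analogue of the Euclidean reduction in \cite[Proposition 3.4]{Ch01oi20classification}; once it is established, setting $\phi$ to this constant value produces a curve $\gamma$ satisfying all four conditions of the conclusion, finishing the proof.
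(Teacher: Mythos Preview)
Your reduction is correct up to the point where you need $\phi(s)=\mp\langle \gamma_1'(s),w'(s)\rangle_L$ to be constant, but this constancy does not follow from the hypotheses, and the frame argument you sketch cannot deliver it. Writing $\gamma_1'=a\,w'+b\,(w\times_L w')$ (which is all that $\langle\gamma_1',w\rangle_L=0$ buys you) and using $w''=\mp w+Q\,(w\times_L w')$, one finds
\[
\frac{d}{ds}\langle\gamma_1',w'\rangle_L=\langle\gamma_1'',w'\rangle_L+Q\,\langle\gamma_1',w\times_L w'\rangle_L,
\]
and neither term on the right is forced to vanish. Concretely, take $w(s)=(\cos s,\sin s,0)$ (so $\langle w',w'\rangle_L=1$) and any base curve with $\gamma_1'(s)=p(s)\,w'(s)+q(s)\,(0,0,1)$ and $|p|>|q|$; then $\langle\gamma_1',w\rangle_L=0$ and the surface is spacelike near $t=0$, yet $\langle\gamma_1',w'\rangle_L=p(s)$ is an arbitrary smooth function. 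A shift along the rulings by a \emph{constant} therefore cannot, in general, produce a base curve orthogonal to both $w$ and $w'$.

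The paper's argument is structurally different: instead of the one-parameter shift $\gamma=\gamma_1+\phi\,w$, it posits $\gamma=y_1\gamma_1+y_2\,w$ with \emph{two} unknown scalar functions and reads the two orthogonality requirements $\langle\gamma',w\rangle_L=0$, $\langle\gamma',w'\rangle_L=0$ as a first-order linear ODE system for $(y_1,y_2)$, then appeals to local existence of solutions. The extra degree of freedom $y_1$ is precisely what absorbs the non-constancy of $\langle\gamma_1',w'\rangle_L$; your ansatz is the special case $y_1\equiv 1$, under which the system collapses to the incompatible pair $\phi'=0$ and $\phi=\mp\langle\gamma_1',w'\rangle_L$.
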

\begin{proof}
Let $\gamma_1$ be a spacelike curve and $ w $ a smooth vector field with $\langle w,w \rangle_L =1$. We may parameterize $ M^2 $ as 
$$
X_1(s,t)=\gamma_1(s)+tw(s),
$$ 
where $\langle \gamma_1^{\prime},w\rangle_L=0$, $\langle w,w\rangle_L=1$ and $\langle w^{\prime},w^{\prime}\rangle_L=\pm 1$. We set $f_1(s)=\langle \gamma_1(s),w(s) \rangle_L$, $f_2(s)=\langle \gamma_1^{\prime}(s),w^{\prime}(s) \rangle_L$ and $f_3(s)=\langle \gamma_1(s),w^{\prime}(s) \rangle_L$. Now, introduce a curve $\gamma$ by 
$$
\gamma(s)=y_1(s)\gamma_1(s)+y_2(s)w(s),
$$
where $y_1$ and $y_2$ are the solutions to the system of ordinary differential equations
	\begin{eqnarray*}
		f_1(s)y_1^{\prime}(s)+y_2^{\prime}(s)=0,\\
		f_2(s)y_1(s)+f_3(s)y_1^{\prime}(s) \pm y_2=0
	\end{eqnarray*}
with a proper initial condition $y_1(0)$ and $y_2(0)$. Then, we conclude $\langle \gamma^{\prime},w \rangle_L=0$ and $\langle \gamma^{\prime},w^{\prime} \rangle_L=0$, proving the result.
\end{proof} 

\quad Now we separate our investigation into two cases in which $ w^{\prime}(s) $ is lightlike or not. 

\begin{theorem}\label{L-1}
Let $ M^2$ be a spacelike non-planar ruled surface and $w(s)$ be a spacelike vector field in $\mathbb{L}^3$ that defines the rulings of $M^2$ such that $w'$ is non-degenerate. If $ M^2 $ is an $\alpha$-singular maximal surface, then $ M^2 $ is cylindrical.
\end{theorem}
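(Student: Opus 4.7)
My plan is to adapt the argument of Theorem \ref{theorem-E} essentially verbatim, proceeding by contradiction in the Lorentzian frame furnished by Proposition \ref{propos}. Assume $M^2$ is non-cylindrical, so $w'(s)\neq 0$ on $I$. The non-degeneracy hypothesis together with continuity makes $\epsilon_1:=\langle w',w'\rangle_L\in\{-1,+1\}$ a constant sign on $I$, and after reparametrization
\begin{equation*}
\langle\gamma',w\rangle_L=\langle\gamma',w'\rangle_L=0,\quad\langle w,w\rangle_L=1,\quad\langle w',w'\rangle_L=\epsilon_1.
\end{equation*}
A direct computation from the Lagrange identity gives $\langle w\times_L w',w\times_L w'\rangle_L=-\epsilon_1$, so $\{w,w',w\times_L w'\}$ is a pseudo-orthonormal frame of signature $(+,\epsilon_1,-\epsilon_1)$. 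Setting $P=(w,w',\gamma')$ and $Q=(w,w',w'')$, one has $P(s)\neq 0$ on $I$, and the Lorentzian analogues of Eqs. \eqref{teo1.3}--\eqref{teo1.6} follow by decomposing $\gamma'$, $\gamma'\times_L w$, $w''$ and $w\times_L w''$ in this frame, each carrying a sign factor of $\epsilon_1$.

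Substituting $X=\gamma+tw$ into Eq. \eqref{sin.min.lor.} with $\epsilon=-1$ (spacelike surface, timelike normal) yields a cubic polynomial identity in $t$ whose coefficients $A_0,\dots,A_3$ must vanish. Their shape reproduces Eqs. \eqref{teo1.9}--\eqref{teo1.12} with $\langle\cdot,\cdot\rangle_L$ in place of $\langle\cdot,\cdot\rangle$ and with $\epsilon_1$ absorbed into the $Q$- and $\alpha$-terms. Comparing $A_0$ with $A_2$ and $A_1$ with $A_3$ gives $P'\langle w,\vec{v}\rangle_L=0$ and $P'\langle\gamma,\vec{v}\rangle_L=0$; since $\langle X,\vec{v}\rangle_L$ cannot vanish on $\mathbb{L}^3_{+}(\vec{v})$, this forces $P\equiv P_0\neq 0$.

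I then transport the trichotomy of Eq. \eqref{teo1.13}. The case $Q\equiv 0$ forces $\vec{v}$ parallel to the spacelike unit vector $w$, impossible since $\vec{v}$ is timelike. The case $\langle\gamma,\vec{v}\rangle_L\equiv 0$ proceeds as in the Euclidean proof, eventually forcing $\langle X,\vec{v}\rangle_L\equiv 0$. For $\langle w,\vec{v}\rangle_L\equiv 0$, differentiation yields $\langle w',\vec{v}\rangle_L\equiv 0$, hence $\vec{v}\parallel w\times_L w'$: when $\epsilon_1=-1$ the right side is spacelike and clashes with $\vec{v}$ being timelike, while when $\epsilon_1=+1$ the Lorentzian analogue of $A_3=0$ reduces to $\mp\alpha\langle\vec{v},\vec{v}\rangle_L=0$, impossible since $\langle\vec{v},\vec{v}\rangle_L=-1$. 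With all three quantities nonzero, the endgame mirrors the passage from Eq. \eqref{teo1.16} to Eq. \eqref{teo1.18}: I differentiate the Lorentzian analogue of Eq. \eqref{teo1.16}, substitute back via the analogues of Eqs. \eqref{teo1.14}--\eqref{teo1.15}, and close with the normalization
\begin{equation*}
\langle w,\vec{v}\rangle_L^2+\epsilon_1\langle w',\vec{v}\rangle_L^2-\epsilon_1(w,w',\vec{v})^2=\langle\vec{v},\vec{v}\rangle_L=-1,
\end{equation*}
arriving at an identity of the form $(2+\alpha)\langle w,\vec{v}\rangle_L^2=c(\alpha,\epsilon_1)$ whose differentiation forces a quantity just shown to be nowhere zero to vanish.

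The main obstacle is the systematic bookkeeping of the causal sign $\epsilon_1$: every one of Eqs. \eqref{teo1.9}--\eqref{teo1.19} acquires an $\epsilon_1$-decorated analogue and one must verify that these signs conspire so that the terminal identity remains nontrivial in both subcases $\epsilon_1=\pm 1$. The sign flip $\langle\vec{v},\vec{v}\rangle_L=-1$, as opposed to $+1$ in the Euclidean setting, is the decisive input: it is what prevents the final algebraic relation from degenerating, and hence what rules out non-cylindrical $\alpha$-singular maximal spacelike ruled surfaces with non-degenerate $w'$.
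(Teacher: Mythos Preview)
Your proposal is correct and follows exactly the approach taken in the paper: set up the pseudo-orthonormal frame $\{w,w',w\times_L w'\}$ with the sign $\delta=\epsilon_1$, compute the coefficients $A_0,\dots,A_3$, and then rerun the contradiction argument of Theorem~\ref{theorem-E} with the appropriate sign modifications. In fact the paper's own proof is terser than yours---after displaying the $A_n$'s it simply says ``the result concludes with the same arguments from the proof of Theorem~\ref{theorem-E}''---so your explicit handling of the trichotomy \eqref{teo1.13} via causal-character arguments and of the normalization $\langle w,\vec{v}\rangle_L^2+\epsilon_1\langle w',\vec{v}\rangle_L^2-\epsilon_1(w,w',\vec{v})^2=-1$ supplies detail the paper leaves to the reader.
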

\begin{proof}
By contradiction, we assume that $ M^2 $ is a non-cylindrical ruled surface being $\alpha$-singular maximal. By Proposition \ref{propos}, we may suppose on $I$ that
\begin{equation*}
\langle \gamma'(s),w(s) \rangle_L =0,\quad \langle \gamma'(s),w'(s) \rangle_L =0, \quad \langle w(s),w(s) \rangle_L=1, \quad \langle w'(s),w'(s)\rangle_L=\delta=\pm 1.
\end{equation*}
We have the orthonormal basis $\left\{w,w^{\prime},w \times_L w^{\prime}\right\}$, where $\langle w \times_L w^{\prime},w \times_L w^{\prime} \rangle=-\delta $. As in the Euclidean setting, we may introduce the two smooth functions
\begin{equation*}\label{teo3.1}
	P(s)=(\gamma'(s), w(s), w'(s)), \quad Q(s)=(w(s),w'(s),w''(s)).
\end{equation*}
In terms of the basis above, we get
\begin{eqnarray*}
	\gamma'&=& -\delta Pw\times_L w',\label{teo3.2}\\
	\gamma^{\prime}\times_L w&=&\delta Pw^{\prime}, \label{teo3.3}\\
	w''&=& - \delta(w + Qw\times_L w'),\label{teo3.4}\\
	w\times_L w''&=& - \delta Qw'.\label{teo3.5}
\end{eqnarray*}
A direct calculation follows
\begin{eqnarray*}
	E&=&-\delta (P^2 - t^2),\quad F=0,\quad G=1,\\
	X_s\times_L X_t&=&\delta Pw^{\prime}-tw\times_L w^{\prime},\\
	X_{ss}&=&-\delta tw+PQw^{\prime}-\delta( P^{\prime}+tQ)w\times_L w^{\prime},\\
	(X_s,X_t,X_{ss})&=&P^2Q-P^{\prime}t - Qt^2.
\end{eqnarray*}
Eq. \eqref{sin.min.lor.} is now
$$
\sum_{n=0}^{3}A_nt^n=0,
$$
where 
\begin{eqnarray*}
	A_0&=&-\alpha P^3\langle w^{\prime},\vec{v}\rangle_L+P^2Q\langle \gamma,\vec{v} \rangle_L,\\
	A_1&=& \alpha \delta P^2( w , w^{\prime},\vec{v})+P^2Q\langle w,\vec{v} \rangle_L - P^{\prime}\langle \gamma,\vec{v} \rangle_L,\\
	A_2&=&\alpha P\langle w^{\prime},\vec{v}\rangle_L -Q\langle \gamma,\vec{v} \rangle_L-P^{\prime}\langle w,\vec{v} \rangle_L,\\
	A_3&=& -\alpha \delta ( w , w^{\prime},\vec{v})-Q\langle w,\vec{v}\rangle .
\end{eqnarray*}
The result concludes with the same arguments from the proof of Theorem \ref{theorem-E}.
\end{proof}

\quad We now treat the case that $w$ is spacelike and $w'$ is lightlike on $I$. Then $ M^2 $ is parametrized as Eq. \eqref{pm} where now
$$
\langle \gamma(s),\gamma(s) \rangle_L=1 \quad  \langle\gamma'(s),w(s) \rangle_L=0, \quad \langle w(s),w(s) \rangle_L=1,
$$ 
for every $s \in I$ \cite{cky}. We have the orthonormal frame $\{ \gamma^{\prime},w,\gamma^{\prime} \times_Lw \}.$

\quad We notice that $w'(s)$ is a lightlike direction in the hyperboloid $\{p\in\mathbb{L}^3:\langle p,p\rangle_L=1\}$ and so $w(s)$ is a straight line \cite{Dillen1999ruled}. With a change of parameter, we may suppose $w(s)=\vec{a}s+\vec{b}$, where 
$$
\langle \vec{a},\vec{a}\rangle_L
=\langle \vec{a},\vec{b}\rangle_L =0, \quad \langle \vec{b},\vec{b}\rangle_L =1.
$$  
After  a rigid motion of $\mathbb{L}^3$, we can consider 
\begin{equation*}
	w(s)=(1,s,s)=s(0,1,1)+(1,0,0), \quad  \vec{a}=(0,1,1),\,  \vec{b}=(1,0,0).
\end{equation*}
With this choice above, we have $w\times_L w'=-w'$.

\quad Set $Q(s)=\langle \gamma^{\prime}(s), w^{\prime}(s) \rangle_L $. Here $Q$ is always different from $0$ because otherwise $w'$ would parallel to $\gamma^{\prime} \times_Lw$. This is impossible due to that $\gamma^{\prime} \times_Lw$ is timelike. Then, we have
\begin{eqnarray*}
	w^{\prime}&=&Q(\gamma^{\prime}+\gamma^{\prime}\times_L w),\label{teo4.1}\\
	\gamma^{\prime\prime}&=&-Qw-(\gamma',w,\gamma'')\gamma^{\prime}\times_Lw \label{teo4.2}.
\end{eqnarray*}
We notice here that
\begin{eqnarray*}
Q'&=&\langle \gamma'',w' \rangle_L \\
&=&(\gamma',w,\gamma'') \langle \gamma' , w \times_L w' \rangle_L \\
&=&Q  (\gamma',w,\gamma'') ,
\end{eqnarray*}
or equivalently, $ (\gamma',w,\gamma'')=Q'/Q$. In addition,
$$
E=1+2Qt, \quad F=0, \quad G=1.
$$
and 
\begin{eqnarray*}
	X_{ss}&=&-Qw- \frac{Q'}{Q} \gamma^{\prime}\times_Lw,\\
	X_s\times_LX_t&=&Qt\gamma^{\prime}+ (1+Qt )\gamma^{\prime}\times_Lw ,\\
	\left(X_s,X_t,X_{ss}\right)&=&\frac{Q^{\prime}}{Q}+Q^{\prime}t.
\end{eqnarray*}

\begin{theorem}\label{L-2}
Let $ M^2$ be a spacelike ruled surface and $w(s)$ be a spacelike vector field in $\mathbb{L}^3$ which defines the rulings of $M^2$ such that $w'$ is lightlike. If $ M^2 $ is an $\alpha-$singular maximal surface, then $ M^2 $ is cylindrical.
\end{theorem}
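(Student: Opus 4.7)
The plan is to argue by contradiction, mirroring Theorems \ref{theorem-E} and \ref{L-1} but exploiting the extra rigidity that comes from the ruling direction $w(s)=(1,s,s)$ being affine in $s$.

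Using the identities stated just before the theorem (in particular $E=1+2Qt$, $F=0$, $G=1$, $X_{tt}=0$, $(X_s,X_t,X_{ss})=Q'/Q+Q't$) and noting that $\langle X,\vec{v}\rangle_L=L+tB$ is affine in $t$ (with $L=\langle\gamma,\vec{v}\rangle_L$, $B=\langle w,\vec{v}\rangle_L$), Eq.~\eqref{sin.min.lor.} becomes, after clearing the denominator, a polynomial identity of degree at most three in $t$. Introducing also $A=\langle\gamma',\vec{v}\rangle_L$ and $C=\langle\gamma'\times_L w,\vec{v}\rangle_L$ and computing $(X_s,X_t,\vec{v})=QtA+(1+Qt)C$, the $t^3$ coefficient vanishes trivially, while the $t^0,t^1,t^2$ coefficients give
\begin{align*}
Q'L &= -\alpha QC,\\
Q'B+QQ'L &= -\alpha Q^2(A+3C),\\
Q'B &= -2\alpha Q^2(A+C).
\end{align*}
Subtracting the third equation from the second and comparing with the first yields $\alpha QA=0$. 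Thus, in the nontrivial regime $\alpha\neq 0$ (the case $\alpha=0$ is classical maximality), $\langle\gamma',\vec{v}\rangle_L=0$.

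Next, if additionally $C=0$, then $\vec{v}$ is Lorentz-orthogonal to both the spacelike $\gamma'$ and the timelike $\gamma'\times_L w$, forcing $\vec{v}\parallel w$; but $\vec{v}$ is timelike and $w$ is spacelike, a contradiction. So $C\neq 0$. Dividing the third equation by the first then gives $B=2QL$; differentiating this, using $L'=A=0$, and matching with $B'=\langle w',\vec{v}\rangle_L=Q(A+C)=QC$ produces $QC=2Q'L$. Combined with the first coefficient equation, this pins down $\alpha=-1/2$.

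The main obstacle is disposing of this borderline value $\alpha=-1/2$, where the polynomial analysis alone is insufficient. The saving feature is that $w(s)=(1,s,s)$ is affine in $s$, so $w''(s)\equiv 0$ and hence $B''(s)=\langle w''(s),\vec{v}\rangle_L\equiv 0$. Using $B'=QC$ and $C'=QB$ (the latter derived from the formulas for $\gamma''$ and $(\gamma'\times_L w)'$ under $A=0$), one computes $B''=Q'C+Q^2B$. Substituting $B=2QL$ and $C=2Q'L/Q$ then yields
$$
B''=\frac{2L}{Q}\bigl((Q')^2+Q^4\bigr).
$$
Since $(Q')^2+Q^4>0$ whenever $Q\neq 0$, the identity $B''\equiv 0$ forces $L=0$, whence $B=2QL=0$ and $\langle X,\vec{v}\rangle_L\equiv 0$, contradicting $M^2\subset\mathbb{L}^3_+(\vec{v})$ and completing the proof.
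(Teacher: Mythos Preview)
Your argument is correct, and in fact it repairs a gap in the paper's own proof.  Up to the point where $\langle\gamma',\vec v\rangle_L=0$ is obtained, your derivation and the paper's are the same computation in different notation (your $L,B,A,C$ are the paper's $\langle\gamma,\vec v\rangle_L,\,a,\,\langle\gamma',\vec v\rangle_L,\,(\gamma',w,\vec v)$ respectively).  From there the paper combines the equations \eqref{last2} and \eqref{last3} to assert $\langle\gamma,\vec v\rangle_L=a/Q$ and then substitutes this back into \eqref{teo4.3} to obtain $Q'a+\alpha Q^2b=0$, which it compares with \eqref{last3}.  However, dividing \eqref{last2} by \eqref{last3} actually gives $\langle\gamma,\vec v\rangle_L=a/(2Q)$ (your relation $B=2QL$), and substituting this into \eqref{teo4.3} simply reproduces \eqref{last3}; the paper's final contradiction is therefore circular as written.

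Your completion is different and sound: differentiating $B=2QL$ and using $L'=A=0$, $B'=\langle w',\vec v\rangle_L=QC$ forces $\alpha=-\tfrac12$, and for this residual value you exploit the additional information $w''=0$ (built into the normalization $w(s)=(1,s,s)$) to get $B''\equiv 0$; the identity $B''=(2L/Q)\bigl((Q')^2+Q^4\bigr)$ then gives the desired contradiction $L=0$.  Two small points worth making explicit in the write-up: before dividing the third coefficient equation by the first, note that $Q'=0$ or $L=0$ would each force $C=0$ via the first equation, which you have already excluded; this justifies $Q'\neq 0$ and $L\neq 0$.  Also, your computation $C'=QB$ uses $(\gamma'\times_L w)'=-(Q'/Q)\gamma'+Qw$, which follows from the frame relations $(\gamma'\times_L w)\times_L w=\gamma'$ and $\gamma'\times_L(\gamma'\times_L w)=w$; stating this explicitly would make the step transparent.
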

\begin{proof}
On the contrary, we assume that $ M^2 $  is an $\alpha-$singular maximal ruled surface which is non-cylindrical. 
So, Eq. \eqref{sin.min.lor.} is
\begin{equation*}
	\sum_{k=0}^{2}A_kt^k=0,
\end{equation*}
where
\begin{eqnarray}
	A_0&=&\frac{Q'}{Q}\langle\gamma, \vec{v} \rangle_L+\alpha  (\gamma',w,\vec{v}) ,\label{teo4.3}\\
	A_1&=&\frac{Q'}{Q}\langle w, \vec{v} \rangle_L +Q'\langle\gamma, \vec{v} \rangle_L + \alpha Q (\langle\gamma', \vec{v} \rangle_L +3(\gamma',w,\vec{v})),\label{teo4.4}\\
	A_2&=&Q'\langle w, \vec{v} \rangle_L +2\alpha Q^2(\langle\gamma', \vec{v} \rangle_L +(\gamma',w,\vec{v})).\label{teo4.5}
\end{eqnarray}
Here $A_0,A_1,A_2$ are identically $0$. Dividing Eq. \eqref{teo4.4} by Q and considering Eq. \eqref{teo4.3} into the resulting equation, we conclude
\begin{equation}
\frac{Q'}{Q^2}\langle w, \vec{v} \rangle_L + \alpha \langle\gamma', \vec{v} \rangle_L +2\alpha (\gamma',w,\vec{v})=0. \label{last}
\end{equation}
Now we divide Eq. \eqref{teo4.5}  by $Q^2$ and replace in the last equation, obtaining $\langle\gamma', \vec{v} \rangle_L=0.$ So, we have $\vec{v}=a(s)w(s)+b(s)\gamma'\times_Lw$, where $b(s)$ is always different from $0$ because $\vec{v}$ is timelike but $w(s)$ is spacelike. This immediately implies $Q'\neq 0$. Next, Eqs. \eqref{teo4.3} and \eqref{last} are now
\begin{equation}
Q' \langle\gamma, \vec{v} \rangle_L+\alpha Qb=0, \label{last2}
\end{equation}
and 
\begin{equation}
Q'a +2\alpha Q^2b=0. \label{last3}
\end{equation}
By Eqs. \eqref{last2} and \eqref{last3}, we have $\langle\gamma, \vec{v} \rangle_L=a/Q$. Replacing in Eq. \eqref{teo4.3},
$$
Q'a+\alpha Q^2b=0.
$$
Comparing with Eq. \eqref{last3}, we arrive to the contradiction $\alpha Q^2b=0.$
\end{proof}

%%%%%%%%%%%%%%%%%%%%%%%%%%%%%%%%%%%%%%%%%%%%%%55


\begin{thebibliography}{99}
		
\bibitem{al} M. E. Aydin, R. L\'{o}pez, \emph{Ruled translating solitons in Minkowski 3-space}, J. Geom. Phys. \textbf{170} (2021), 104392.

\bibitem{bd} J. Bemelmans, U. Dierkes, \emph{On a singular variational integral with linea growth, I: existence and regularity of minimizers}, Arch. Rat. Mech. Anal. \textbf{100} (1087), 83–103.

\bibitem{bht} R. Böhme, S. Hildebrandt, E. Taush, \emph{The two-dimensional analogue of the catenary}, Pacific J. Math. \textbf{88} (1980), 247-278.
	
\bibitem{Ch01oi20classification}  M. Choi, Y. H. Kim., D. W. Yoon, \emph{Classification of ruled surfaces with pointwise 1-type Gauss map}, Taiwanese J. Math. \textbf{14}(4) (2010), 1297-1308.

\bibitem{cky}  M. Choi, Y. H. Kim., D. W. Yoon, \emph{Classification of ruled surfaces with pointwise 1-type Gauss map in Minkowski 3-space}, Taiwanese J. Math. \textbf{15}(3) (2011), 1141-1161.

\bibitem{dh} U. Dierkes, G. Huisken, \emph{The n-dimensional analogue of the catenary: existence and nonexistence}, Pac. J. Math. \textbf{141} (1990), 47–54.

\bibitem{d1} U. Dierkes,  \emph{A Bernstein result for energy minimizing hypersurfaces}, Calc. Var. Partial Differ. Equ. \textbf{1}(1) (1993), 37–54.

\bibitem{Dierkes2003singular}  U. Dierkes, \emph{Singular minimal surfaces}, in  Geometric Analysis and Nonlinear Partial Differential Equations, 176-193, Springer, Berlin, 2003.

\bibitem{Dillen1999ruled} F. Dillen, W. Kuhnel, \emph{Ruled Weingarten surfaces in Minkowski $3$--space}, Manuscr. Math. \textbf{98} (1999), 307--320. 

\bibitem{Giaquinta2004calculus} M. Giaquinta, S. Hildebrandt, \emph{Calculus of Variations I}, Springer--Verlag Berlin Heidelberg, NewYork, Second Edition, 2004.

\bibitem{Gill2005catenary} J. B. Gil, \emph{The catenary (almost) everywhere}, Bolet\`{\i}n de la AMV \textbf{XII}(2) (2005), 251--258.

\bibitem{gray} A. Gray, \emph{Modern Differential Geometry of Curves and Surfaces with Mathematica}, second edition,  CRC Press, Boca Raton, FL, 1998.

\bibitem{hh} D. T. Hieu, N. M. Hoang, \emph{Ruled minimal surfaces in $\mathbf{R}^3$ with density $e^z$}, Pacific. J. Math. \textbf{243}(2) (2009), 277–285.

\bibitem{kp} D. Kim, J. Pyo, \emph{Translating solitons for the inverse mean curvature flow}, Results Math. \textbf{74}, 64 (2019).

\bibitem{Lopez2013constant} R. L\'{o}pez,  \emph{Constant Mean Curvature Surfaces with Boundary}, Springer–Verlag, Berlin, 2013.

\bibitem{Lopez2014differential} R. L\'{o}pez, \emph{Differential geometry of curves and surfaces in Lorentz-Minkowski space}, Int. Electron. J. Geom. \textbf{7}(1)(2014), 44--107.

\bibitem{Lopez2018invariant} R. L\'{o}pez, \emph{Invariant singular minimal surfaces}, Ann. Glob. Anal. Geom. \textbf{53}(4) (2018), 521--541.

\bibitem{Lopez2019} R. L\'{o}pez, \emph{The Dirichlet problem for the $\alpha-$singular minimal surface equation}, Arch. Math. \textbf{112} (2019), 213–222. 

\bibitem{Lopez2019-1} R. L\'{o}pez, \emph{Uniqueness of critical points and maximum principles of the singular minimal surface equation}, J. Differ. Equ., \textbf{266} (7) (2019), 3927-3941.

\bibitem{Lopez2020-1} R. L\'{o}pez, \emph{Compact singular minimal surfaces with boundary}, Am. J. Math., \textbf{142} (2020), 521-541.

\bibitem{Lopez2020thetwo} R. L\'{o}pez, \emph{The two-dimensional analogue of the Lorentzian catenary and the Dirichlet problem}, Pacific J. Math. \textbf{305}(2)(2020), 693--719.

\bibitem{ns} G. S. Neto, V. Silva, \emph{Ruled surfaces as translating solitons of the inverse mean curvature flow in the three-dimensional Lorentz-Minkowski space}, J. Math. Anal. Appl. \textbf{528}(2) (2023), 127550.

\bibitem{n} J. C. C. Nitsche, \emph{A non-existence theorem for the two-dimensional analogue of the catenary}, Analysis \textbf{6} (1986),
143–156.

\bibitem{Pottman2007architectural} H. Pottmann, A. Asperl, M. Hofer and A. Kilian, \emph{Architectural Geometry}, Exton:Bentley Institute Press, 2007.

\bibitem{o} B. O'Neill, \emph{Semi-Riemannian Geometry with Applications to Relativity}, in: Pure and Applied Mathematics, vol. 103, Academic Press, San Diego, CA, 1983.
\end{thebibliography}
 \end{document}